% On a Problem of M. kambites Regarding Abundant Semigroups
% Joao Araujo and Michael Kinyon
% Last updated: 20 July 2011

\documentclass[12pt,reqno]{amsart}

% packages
%\usepackage{showkeys}   % uncomment to see labels
\usepackage{fullpage}   % use as much space as possible
\usepackage{url}

% theorem styles
\newtheorem{lemma}{Lemma}
\newtheorem{theorem}[lemma]{Theorem}
\newtheorem{proposition}[lemma]{Proposition}
\newtheorem{problem}{Problem}
\newtheorem*{conjecture}{Conjecture}
\theoremstyle{remark}
\newtheorem*{acknowledgment}{Acknowledgment}

% equation numbering
\numberwithin{equation}{section}

% special names
\newcommand{\Ker}{\operatorname{Ker}}
\renewcommand{\Im}{\operatorname{Im}}

% change amsthm's "Proof" to bold as Joao prefers

%%%%%%%%

%\begin{document}

\title{On a Problem of M. Kambites Regarding Abundant Semigroups}

\author{Jo\~{a}o Ara\'{u}jo}
\author{Michael Kinyon}

\address[Ara\'{u}jo]
{Universidade Aberta
and
Centro de \'{A}lgebra \\
Universidade de Lisboa \\
1649-003 Lisboa \\ Portugal}
\email{\url{jaraujo@ptmat.fc.ul.pt}}

\address[Kinyon]{Department of Mathematics \\
University of Denver \\ 2360 S Gaylord St \\ Denver, Colorado 80208 USA}
\email{\url{mkinyon@du.edu}}

\begin{document}

\begin{abstract}
A semigroup is \emph{regular} if it contains at least one idempotent in each $\mathcal{R}$-class and in each $\mathcal{L}$-class. A regular semigroup is \emph{inverse} if it satisfies either of
the following equivalent conditions: (i) there is a unique idempotent in each $\mathcal{R}$-class and in each $\mathcal{L}$-class, or (ii) the idempotents commute.

Analogously, a semigroup is \emph{abundant} if it contains at least one idempotent in each $\mathcal{R}^*$-class and in each $\mathcal{L}^*$-class. An abundant semigroup is \emph{adequate} if its idempotents commute. In adequate semigroups, there is a unique idempotent in each $\mathcal{R}^*$ and $\mathcal{L}^*$-class. M. Kambites raised the question of the converse: in a finite abundant semigroup such that there is a unique idempotent in each $\mathcal{R}^*$ and $\mathcal{L}^*$-class, must the idempotents commute? In this note we provide a negative answer to this question.
\end{abstract}

\maketitle

\section{Introduction}

For a set $X$, denote by $T(X)$ the monoid of all total transformations of $X$ to itself.
For $t\in T(X)$, let $\Im(t)= \{ xt \mid x\in X\}$ and $\Ker(t)=\{(x,y) \mid xt=yt\}$
denote, respectively, the image of $X$ under $t$ and the kernel of $t$.

Let $S$ be a semigroup. We denote the right regular representation and
the left regular antirepresentation of $S$ by
$\rho:S\rightarrow T(S^1) ; s \mapsto \rho_s$ and $\lambda: S\rightarrow T(S^1) ; s \mapsto \lambda_s$,
respectively, where $(x)\rho_s = xs$ and $\lambda_s(x) = sx$ for all $x\in S^1$.

Now we can define the usual Green's equivalence relations on $S$ as follows:
for $s,t\in S$,
\[
s\mathcal{L}t \Leftrightarrow \Im (\rho_s)=\Im(\rho_t)
\qquad\text{and}\qquad
s\mathcal{R}t \Leftrightarrow \Im (\lambda_s)=\Im(\lambda_t)\,.
\]
Similarly we can define two more equivalence relations on $S$ as follows: for $s,t\in S$,
\[
s\mathcal{L}^* t \Leftrightarrow \Ker (\lambda_s)=\Ker(\lambda_t)
\qquad\text{and}\qquad
s\mathcal{R}^* t \Leftrightarrow \Ker (\rho_s)=\Ker(\rho_t)\,.
\]
More simply put, $s\mathcal{L}^* t$ if and only if, for all $x,y\in S^1$,
$sx=sy\Leftrightarrow tx=ty$, and similarly, $s\mathcal{R}^* t$ if and only if,
for all $x,y\in S^1$, $xs=ys\Leftrightarrow xt=yt$.
It is not difficult to show that $\mathcal{L}\subseteq \mathcal{L}^*$ and
$\mathcal{R}\subseteq \mathcal{R}^*$. In addition, it is well known that
$s\mathcal{L}^* t$ if and only if $s\mathcal{L} t$ in some semigroup containing $S$
as a subsemigroup, and a similar characterization holds for $\mathcal{R}^*$.
%(For a semigroup $S$ and $s\in S$, we denote the
%$\mathcal{L}^*$-class of $s$ by $L^*_{s}$; similarly for $\mathcal{R}^*$.)

A semigroup is \emph{regular} if there is at least one idempotent in every
$\mathcal{L}$-class and in every $\mathcal{R}$-class. A semigroup is
\emph{abundant} if there is at least one idempotent in every
$\mathcal{L}^*$-class and in every $\mathcal{R}^*$-class \cite{fountain2}.

Within regular semigroups, the notion of \emph{inverse} semigroup is characterized
by either of the following equivalent properties: (i) the idempotents commute, or
(ii) there exists a unique idempotent
in every $\mathcal{L}$-class and in every $\mathcal{R}$-class.

A generalization of (i) is as follows: a semigroup is \emph{adequate} if it is
abundant and the idempotents commute \cite{fountain1}. Adequate semigroups satisfy the starred
analog of (ii), that is, in an adequate semigroup,
there exists a unique idempotent in every $\mathcal{L}^*$-class and
in every $\mathcal{R}^*$-class. Fountain constructed an infinite example of an abundant, nonadequate semigroup with unique idempotents in each $\mathcal{L}^*$-class and
in each $\mathcal{R}^*$-class (\cite{fountain1}, Example 1.4).

At the second meeting of the North British Semigroups and Applications Network
(16-17 April 2009) \cite{MK}, Mark Kambites asked a natural question:

\medskip

\noindent\emph{Let} $S$ \emph{be a finite abundant semigroup such that
there exists a unique idempotent in every} $\mathcal{L}^*$\emph{-class and
in every} $\mathcal{R}^*$\emph{-class. Do the idempotents of} $S$ \emph{commute?}

\medskip

We will say that a semigroup is \emph{amiable} if each $\mathcal{R}^*$-class and each $\mathcal{L}^*$-class
contains a unique idempotent. Every amiable semigroup is obviously abundant. Kambites'
question can be rephrased as: \emph{Is every finite amiable semigroup adequate?}

Kambites' question attracted the attention of a number of semigroup theorists
attending the NBSAN meeting. The aim of this note is to answer the question in the negative.

\section{The Smallest Example}

In this section we construct the smallest example of an amiable semigroup with a pair
of noncommuting idempotents, and show further that, up to isomorphism, it is the unique
such semigroup of its size.

A key observation used in the arguments that follow is that for idempotents $a,b$ in
a semigroup, $a \mathcal{L}^* b$ if and only if $a\mathcal{L} b$, and similarly for
$\mathcal{R}^*$ and $\mathcal{R}$.

\begin{lemma}
\label{lem:atleast4}
Let $S$ be an amiable semigroup and assume $a,b\in S$ are noncommuting idempotents.
Let $c = ab$ and $d = ba$. Then the elements $a,b,c,d$ are distinct. Further,
$ac = cb = c$, $bd = da = d$, $ad = ca = cd = aba \neq a,b$ and $bc = db = dc = bab \neq a,b$.
\end{lemma}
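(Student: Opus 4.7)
The plan is to split the lemma into its two aspects: the product identities, which do not use amiability, and the distinctness assertions, which do. The tool I will use throughout the distinctness arguments is the observation highlighted just before the lemma: in any semigroup $\mathcal{L}$ and $\mathcal{L}^*$ (and likewise $\mathcal{R}$ and $\mathcal{R}^*$) coincide on idempotents, so in an amiable semigroup any two $\mathcal{L}$- or $\mathcal{R}$-related idempotents must be equal. Spelled out: if $e,f$ are idempotents of $S$ with either $ef=e$ and $fe=f$, or $ef=f$ and $fe=e$, then $e=f$.

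For the product identities I would simply compute, relying on $a^2=a$ and $b^2=b$. For instance $ac = a^2 b = ab = c$ and $cb = ab^2 = c$; for the triple coincidence, $ad = a(ba) = aba$, $ca = (ab)a = aba$, and $cd = (ab)(ba) = ab^2 a = aba$. The $bd=da=d$ and $bc=db=dc=bab$ identities are dual. None of this uses amiability.

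For distinctness, $a\neq b$ is immediate from $ab\neq ba$, and $c\neq d$ is the non-commutativity hypothesis. The four other claims all follow a uniform template: assume the equality in question, deduce that one of $c,d$ is idempotent and is $\mathcal{L}$- or $\mathcal{R}$-related to $a$ or $b$, and invoke amiability to contradict $ab\neq ba$. For example, to show $a\neq c$, assume $a=ab$. Then $d^2 = b(ab)a = ba^2 = ba = d$, so $d$ is idempotent; combined with $ad = aba = a$ (using $ab=a$) and $da=d$, this gives $a\,\mathcal{L}\,d$, hence $a=d=ba$, whence $ab=ba$. The cases $b\neq c$, $a\neq d$, $b\neq d$ are handled by the obvious $a\leftrightarrow b$ and $\mathcal{L}/\mathcal{R}$ duality symmetries; in each case one of $c$ or $d$ is shown idempotent and Green-related to $a$ or $b$.

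Finally, for $aba\neq a$, assume $aba=a$; then $d^2 = b(aba) = ba = d$, while $ad = aba = a$ and $da = d$, so $a\,\mathcal{L}\,d$, giving $a=d$ and contradicting the distinctness of $a$ and $d$ just established. For $aba\neq b$, assume $aba=b$ and right-multiply by $a$ to obtain $aba = ba$, so $b = d$, again contradicting distinctness. The cases $bab\neq a$ and $bab\neq b$ are symmetric. The main obstacle is not depth but ordering: the four distinctness statements for $\{a,b,c,d\}$ must be proved first so that they are available to rule out $aba,bab\in\{a,b\}$ in the last step.
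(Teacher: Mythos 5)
Your proposal is correct and follows essentially the same strategy as the paper: prove the product identities by direct computation, then establish the distinctness claims by assuming an equality, exhibiting an idempotent ($d$ or $ab$) that is $\mathcal{L}$- or $\mathcal{R}$-related to another idempotent, and invoking amiability, with the remaining cases dispatched by the $a\leftrightarrow b$ and left--right symmetries. The only differences are cosmetic (e.g., for $aba\neq a$ you use $a\,\mathcal{L}\,d$ where the paper uses $ab\,\mathcal{R}\,a$, and for $aba\neq b$ you multiply on the right by $a$ rather than on the left), and your remark about the required ordering of the distinctness claims matches the paper's structure.
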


\begin{proof}
Suppose, for instance, $c = a$. Then $dd = baba = bca = ba = d$, that is, $d$ is an idempotent.
Since $da = d$ and $ad = a$, we have $a \mathcal{L} d$. Since $S$ is amiable,
$a = d$, which contradicts the assumption that $c\neq d$. Thus $c\neq a$,
and by a similar argument, $d\neq b$. Exchanging the roles of $a$ and $b$, we also have
$d\neq a$ and $c \neq b$. This establishes the first claim.

If $aba = a$, then $abab = ab$ so that $ab$ is an idempotent. Since $a(ab) = ab$ and
$(ab)a = a$, we have $ab\mathcal{R}a$. Since $S$ is amiable, $ab = a$,
a contradiction. If $aba = b$, then $ab = aaba = aba = b$, another contradiction.
That $bab \neq a,b$ follows from exchanging the roles of $a$ and $b$. The remaining
equalities are clear.
\end{proof}

It follows from Lemma \ref{lem:atleast4} that an amiable semigroup
which is not adequate must have order at least $4$. A partial multiplication table
looks like this:
\begin{table}[htb]
\[
\begin{tabular}{c|cccc}
    & $a$ & $b$ & $c$ & $d$ \\ \hline
  $a$ & $a$ & $c$ & $c$ & $e$ \\
  $b$ & $d$ & $b$ & $f$ & $d$ \\
  $c$ & $e$ & $c$ & $g$ & $e$ \\
  $d$ & $d$ & $f$ & $f$ & $h$
\end{tabular}
\]
\end{table}

\noindent where $e = ad = ca = cd$, $f = bc = db = dc$ and $g = cc = eb = af$ are not necessarily
distinct from $a,b,c,d$ or from each other.

\begin{lemma}
\label{lem:basic}
Let $S$ be an amiable semigroup and assume $a,b\in S$ are noncommuting idempotents.
Then $aba = ab$ if and only if $bab = ab$. When these conditions hold, $a$ and $b$
generate a subsemigroup of $S$ of order $4$.
\end{lemma}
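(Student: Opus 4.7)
My strategy uses the standard fact that, for idempotents $e, f$ in any semigroup, $e \mathcal{L} f$ is equivalent to $ef = e$ together with $fe = f$, and dually $e \mathcal{R} f$ is equivalent to $ef = f$ together with $fe = e$. Combined with the key observation preceding Lemma \ref{lem:atleast4} and amiability, this lets me conclude $e = f$ whenever two idempotents satisfy the appropriate pair of equations. The whole proof will therefore proceed by producing an auxiliary idempotent standing in the right relation to $c = ab$.

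For the forward direction, set $c = ab$ and assume $aba = ab$. Since Lemma \ref{lem:atleast4} gives $ca = aba$, the hypothesis yields $ca = c$; combined with $cb = c$ this gives $c^2 = (ca)b = cb = c$, so $c$ is idempotent. Next, using $cb = c$ and $c^2 = c$ one verifies that $bc$ is also idempotent and that $c(bc) = c$ and $(bc)c = bc$. These are precisely the equations for $c \mathcal{L} bc$, so amiability forces $c = bc = bab$, as required.

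The converse is dual. Assuming $bab = ab$, the identity $bc = bab$ from Lemma \ref{lem:atleast4} together with $ac = c$ gives $c^2 = a(bab) = ac = c$. Then $ca$ is readily checked to be an idempotent satisfying $c(ca) = ca$ and $(ca)c = c$, so $c \mathcal{R} ca$ and hence $c = ca = aba$.

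For the second assertion, when both conditions hold we have $aba = bab = ab = c$, so the chains $ad = ca = cd$ and $bc = db = dc$ of Lemma \ref{lem:atleast4} all collapse to $c$. A short computation yields $c^2 = c$ and $d^2 = b(aba) = bc = c$, and with the remaining products already recorded in Lemma \ref{lem:atleast4}, every product of two elements of $\{a, b, c, d\}$ lies in this set. These four elements, distinct by Lemma \ref{lem:atleast4}, therefore form a subsemigroup of order $4$. The main obstacle throughout is spotting the right auxiliary idempotent ($bc$ in the forward direction, $ca$ in the converse) that sits in the appropriate Green's relation with $c$; once that choice is made, the verifications reduce to short manipulations using Lemma \ref{lem:atleast4}.
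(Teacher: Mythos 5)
Your proof is correct and takes essentially the same approach as the paper: since $bc=bab$ and $ca=aba$ by Lemma \ref{lem:atleast4}, your auxiliary idempotents and the relations $c\,\mathcal{L}\,bc$ and $c\,\mathcal{R}\,ca$ are exactly the paper's $ab\,\mathcal{L}\,bab$ and $ab\,\mathcal{R}\,aba$, resolved by amiability in the same way. The verification that $\{a,b,c,d\}$ is closed under multiplication also matches the paper's argument.
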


\begin{proof}
Suppose first that $aba = ab$. Then $abab = ab$ and $(bab)(bab) = bab$, that is,
$ab$ and $bab$ are idempotents. Since $(ab)(bab) = ab$ and $(bab)(ab) = bab$, we have
$ab \mathcal{L} bab$. Since $S$ is amiable, $bab = ab$.

Conversely, if $bab = ab$, then again $ab$ is an idempotent and also
$(aba)(aba) = aba$, that is, $aba$ is an idempotent. Since $(aba)(ab) = ab$
and $(ab)(aba) = aba$, we have $ab \mathcal{R} aba$. Since $S$ is amiable, $aba = ab$.

Now assume the conditions $aba = bab = ab$ hold.
Set $c = ab$ and $d = ba$. By Lemma \ref{lem:atleast4} and the discussion that
follows, $ad = ca = cd = bc = db = dc = cc = c$, and so
$M = \{a,b,c,d\}$ is a subsemigroup of $S$ with multiplication table given in
Table 1.
\end{proof}

\begin{table}
\[
\begin{tabular}{c|cccc}
  $M$ & $a$ & $b$ & $c$ & $d$ \\ \hline
  $a$ & $a$ & $c$ & $c$ & $c$ \\
  $b$ & $d$ & $b$ & $c$ & $d$ \\
  $c$ & $c$ & $c$ & $c$ & $c$ \\
  $d$ & $d$ & $c$ & $c$ & $c$
\end{tabular}
\]
\caption{The smallest amiable semigroup which is not adequate.}
\end{table}

In Lemma \ref{lem:basic}, the magma $M$ of Table 1 is a semigroup because $M$ is closed
under the multiplication of the semigroup $S$. Now we show that $M$ is a semigroup in its
own right.

\begin{lemma}
\label{lem:referee}
The magma $M$ defined by Table 1 is a semigroup.
\end{lemma}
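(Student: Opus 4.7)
My plan is to verify the associative law for Table~1 by a short case analysis, leveraging the absorbing behavior that is already visible in the table.

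First, inspection of Table~1 shows that $c$ is a two-sided zero of $M$, that is, $cx = xc = c$ for every $x \in M$. Consequently, whenever at least one of $x,y,z$ equals $c$, both $(xy)z$ and $x(yz)$ collapse to $c$ and associativity is automatic. This reduces the problem to the $27$ triples with entries in $\{a,b,d\}$.

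A second layer of absorption handles the bulk of these. From the table, $xy = c$ whenever $x \in \{a,d\}$ and $y \in \{b,d\}$, and more generally $xu = c$ whenever $x \in \{a,d\}$ and $u \in \{b,c,d\}$. One also reads off that $yz \in \{b,c,d\}$ whenever $y \in \{b,d\}$. Thus, for every triple with $x \in \{a,d\}$ and $y \in \{b,d\}$, both $(xy)z$ and $x(yz)$ are forced to equal $c$, which clears a further $12$ cases.

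The remaining $15$ triples partition disjointly into those with $y = a$ (nine triples) and those with $x = b$ and $y \in \{b,d\}$ (six triples). Each is a direct one-line check against Table~1: compute $(xy)z$ and $x(yz)$ and compare. I would organize these in a compact list and verify them case by case. I do not anticipate any structural obstacle; the only care required is in the bookkeeping, since there is no apparent shortcut past the two absorption reductions.
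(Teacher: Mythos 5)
Your argument is correct, and every reduction you claim checks out against Table~1: $c$ is a two-sided zero; $xu=c$ for $x\in\{a,d\}$ and $u\in\{b,c,d\}$; and $yz\in\{b,c,d\}$ for $y\in\{b,d\}$, so the $12$ triples with $x\in\{a,d\}$, $y\in\{b,d\}$ collapse to $c$ on both sides. The remaining $15$ triples do all verify (e.g.\ $(ba)b=db=c=bc=b(ab)$ and $(bd)a=da=d=bd=b(da)$), so the plan goes through with no obstacle. Your route differs from the paper's in organization rather than substance: the paper verifies associativity via the right regular representation, checking $\tau_s\tau_t=\tau_{st}$ for the right-translation maps $\tau_s$, with the zero $c$ disposing of all pairs involving $c$ and nine explicit compositions of $4$-entry transformations handling the rest. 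That formulation packages four triples into each line and is easy to audit mechanically; your double layer of absorption cuts the raw case count further ($15$ residual triples versus the paper's $36$ implicit ones) at the cost of a slightly more delicate bookkeeping of which triples remain. The one thing you must still do to have a complete proof is actually write out those $15$ one-line checks rather than assert that you would; since the lemma is a finite verification, the explicit list is the proof.
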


\begin{proof}
For $s\in M$, define $\tau_s : M\to M$ by
$(t)\tau_s = ts$ for all $t\in M$. We prove associativity by showing that $\tau_s \tau_t = \tau_{st}$ for all $s,t\in M$. Order the elements of $M$ by $a<b<c<d$. Then the mapping $\tau_s$ can be represented by the (transposed) column corresponding to $s$ in Table 1. Thus we write
\[
\tau_a = [\ a\ d\ c\ d\ ], \qquad \tau_b = [\ c\ b\ c\ c\ ], \qquad
\tau_c = [\ c\ c\ c\ c\ ], \qquad \tau_d = [\ c\ d\ c\ c\ ]\,.
\]
The entry of the transformation $\tau_s$ indexed by $t\in M$ is $(t)\tau_s$.
Since $c$ is a zero of $M$, we have $\tau_c \tau_s = \tau_c = \tau_{cs}$ and similarly
$\tau_s \tau_c = \tau_{sc}$ for all $s\in M$. We verify the other nine cases by direct calculation,
recalling that composition is from left to right:
\begin{alignat*}{3}
\tau_a \tau_a &= [\ a\ d\ c\ d\ ]\ [\ a\ d\ c\ d\ ] &&= [\ a\ d\ c\ d\ ] &&= \tau_a = \tau_{aa} \\
\tau_a \tau_b &= [\ a\ d\ c\ d\ ]\ [\ c\ b\ c\ c\ ] &&= [\ c\ c\ c\ c\ ] &&= \tau_c = \tau_{ab} \\
\tau_a \tau_d &= [\ a\ d\ c\ d\ ]\ [\ c\ d\ c\ c\ ] &&= [\ c\ c\ c\ c\ ] &&= \tau_c = \tau_{ad} \\
\tau_b \tau_a &= [\ c\ b\ c\ c\ ]\ [\ a\ d\ c\ d\ ] &&= [\ c\ d\ c\ c\ ] &&= \tau_d = \tau_{ba} \\
\tau_b \tau_b &= [\ c\ b\ c\ c\ ]\ [\ c\ b\ c\ c\ ] &&= [\ c\ b\ c\ c\ ] &&= \tau_b = \tau_{bb} \\
\tau_b \tau_d &= [\ c\ b\ c\ c\ ]\ [\ c\ d\ c\ c\ ] &&= [\ c\ d\ c\ c\ ] &&= \tau_d = \tau_{bd} \\
\tau_d \tau_a &= [\ c\ d\ c\ c\ ]\ [\ a\ d\ c\ d\ ] &&= [\ c\ d\ c\ c\ ] &&= \tau_d = \tau_{da} \\
\tau_d \tau_b &= [\ c\ d\ c\ c\ ]\ [\ c\ b\ c\ c\ ] &&= [\ c\ c\ c\ c\ ] &&= \tau_c = \tau_{db} \\
\tau_d \tau_d &= [\ c\ d\ c\ c\ ]\ [\ c\ d\ c\ c\ ] &&= [\ c\ c\ c\ c\ ] &&= \tau_c = \tau_{dd}\,. \tag*{\qedhere}
\end{alignat*}
\end{proof}

\begin{theorem}
\label{thm:order4}
Up to isomorphism, there is exactly one amiable semigroup of order $4$ which is not adequate.
\end{theorem}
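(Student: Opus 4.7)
My plan is to reduce any amiable non-adequate semigroup $S$ of order $4$ to the semigroup $M$ of Table~1 using Lemmas \ref{lem:atleast4} and \ref{lem:basic}. I would first fix such an $S$ and, since it is not adequate, choose a pair of noncommuting idempotents $a,b \in S$. By Lemma \ref{lem:atleast4}, the four elements $a$, $b$, $c := ab$, $d := ba$ are distinct, and the cardinality constraint $|S|=4$ forces them to exhaust $S$. Moreover, the partial multiplication table displayed after Lemma \ref{lem:atleast4} reduces the determination of the full multiplication on $S$ to computing the four quantities $e = aba$, $f = bab$, $g = cc$, $h = dd$.

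The key step is to analyze the value of $e$. By Lemma \ref{lem:atleast4}, $e \neq a, b$, so $e \in \{c, d\}$. In the case $e = c$ (equivalently $aba = ab$), Lemma \ref{lem:basic} applies directly to the pair $(a,b)$ and identifies $S$ with $M$. In the case $e = d$ (equivalently $aba = ba$), I would observe that the amiable/non-adequate setup is symmetric in $a$ and $b$, so Lemma \ref{lem:basic} applied to the pair $(b, a)$ yields $bab = ba$. Under the relabeling $a' := b$, $b' := a$, $c' := a'b' = d$, $d' := b'a' = c$, the relation $a'b'a' = a'b'$ now holds, so Lemma \ref{lem:basic} identifies $S$, read with the primed labels, with $M$. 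Equivalently, the bijection $\phi : S \to M$ defined by $\phi(a) = b$, $\phi(b) = a$, $\phi(c) = d$, $\phi(d) = c$ is an isomorphism.

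The main obstacle I anticipate is the temptation to conduct a full case analysis on all four unknowns $e, f, g, h$, some of which would yield tables that must then be rejected by verifying directly that they are not amiable. This is circumvented by two observations: Lemma \ref{lem:basic} already determines $f$, $g$, $h$ once $e$ is fixed, and the two admissible values of $e$ are exchanged by the symmetry $a \leftrightarrow b$. The proof thus reduces to this single symmetry argument layered on top of the preceding two lemmas.
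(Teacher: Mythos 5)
Your uniqueness argument coincides with the paper's: use Lemma \ref{lem:atleast4} to get $S=\{a,b,c,d\}$ with $aba\in\{c,d\}$, apply Lemma \ref{lem:basic} directly when $aba=ab$, and apply it with the roles of $a$ and $b$ interchanged when $aba=ba$; the explicit bijection $\phi(a)=b$, $\phi(b)=a$, $\phi(c)=d$, $\phi(d)=c$ you exhibit in the second case is exactly the right isomorphism, and your observation that Lemma \ref{lem:basic} pins down $f,g,h$ once $e$ is fixed is correct.

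There is, however, one genuine omission: the theorem asserts there is \emph{exactly} one such semigroup, and your proposal only proves the ``at most one'' half. Lemma \ref{lem:basic} produces $M$ only as a subsemigroup of a \emph{hypothetical} amiable non-adequate $S$; it does not by itself show that such an $S$ of order $4$ exists. You must still verify that the magma $M$ of Table~1 is associative (this is Lemma \ref{lem:referee}), that it is amiable --- the $\mathcal{L}^*$-classes are $\{a,d\}$, $\{b\}$, $\{c\}$ and the $\mathcal{R}^*$-classes are $\{b,d\}$, $\{a\}$, $\{c\}$, each containing exactly one idempotent --- and that it is not adequate, since $ab=c\neq d=ba$. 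This is a short finite computation, but it is a necessary half of the statement and the paper's proof carries it out explicitly.
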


\begin{proof}
By Lemma \ref{lem:referee}, $M$ is a semigroup. The $\mathcal{L}^*$-classes are
$\{a,d\}$, $\{b\}$ and $\{c\}$. The $\mathcal{R}^*$-classes are
$\{b,d\}$, $\{a\}$ and $\{c\}$. Hence $M$ is amiable, but not adequate since $ab = c\neq d=ba$.

Now let $S$ be an amiable semigroup of order $4$ and assume that $a,b\in S$
are noncommuting idempotents. Set $c = ab$ and $d = ba$.
By Lemma \ref{lem:atleast4}, $S = \{a,b,c,d\}$ and $aba \neq a,b$.
Thus $aba = c$ or $aba = d$. The desired isomorphism of $S$ with $M$ then follows from
Lemma \ref{lem:basic}, directly for the case $aba=c$, and with the roles of
$a$ and $b$ reversed for the case $aba=d$.
\end{proof}

\section{Larger Examples}

A computer search for amiable semigroups which are not adequate revealed that up to order $37$,
every such semigroup contains a copy of the order $4$ example $M$. Thus we offer the following.

\begin{conjecture}
Let $S$ be a finite amiable semigroup which is not adequate. Then $S$ contains a subsemigroup isomorphic to $M$.
\end{conjecture}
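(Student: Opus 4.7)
The plan is to argue by contradiction via a minimal counterexample. Suppose $S$ is a finite amiable non-adequate semigroup of smallest order with no subsemigroup isomorphic to $M$. Since $S$ is not adequate, fix noncommuting idempotents $a, b \in S$. By Lemma \ref{lem:basic}, if any pair $a', b'$ of noncommuting idempotents in $S$ satisfied $a'b'a' = a'b'$, then the proof of Theorem \ref{thm:order4} would embed a copy of $M$ inside $S$, contradicting the choice of $S$. Hence in our minimal counterexample $aba \neq ab$ and $bab \neq ab$ for every pair of noncommuting idempotents.

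The next step is to manufacture further idempotents inside $\langle a, b\rangle$. Since $S$ is finite, the cyclic subsemigroup generated by $ab$ contains a unique idempotent, which I will denote $p := (ab)^\omega$; similarly set $q := (ba)^\omega$. From $a^2 = a$, $b^2 = b$, and $p^2 = p$ one checks directly that $ap = pb = p$ and $bq = qa = q$. A short computation then yields $(pa)(pa) = p(ap)a = p^2 a = pa$, so $pa$ is an idempotent; symmetrically $bp$, $qb$, $aq$ are idempotents of $S$. Iterating the construction — taking idempotent powers of further alternating products built from $a, b, p, q$ and their descendants — produces a finite family $E$ of idempotents of $S$ lying inside $\langle a, b \rangle$.

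The crux of the argument is to exploit amiability to collapse $E$. Every pair in $E$ which is $\mathcal L$- or $\mathcal R$-related in $S$ must coincide, using the observation from just before Lemma \ref{lem:atleast4} that $\mathcal L^* = \mathcal L$ between idempotents. One would aim to show that at the end of the resulting cascade of identifications either (i) some pair $a', b' \in E$ of noncommuting idempotents ends up satisfying $a'b'a' = a'b'$, yielding by Lemma \ref{lem:basic} a forbidden copy of $M$ and contradicting minimality; or (ii) $a$ and $b$ are forced to commute, contradicting the choice of $a, b$. For instance, if $pa = a$ then $p = ap$ puts $p \mathcal{R} a$ in $S$ and hence $p = a$, from which one extracts fresh information about $ab$; similar dichotomies arise for each of $pa, bp, qb, aq$.

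The main obstacle is precisely carrying out this collapse exhaustively. Although the construction yields many candidate idempotents and many forced identifications, ruling out every configuration compatible with amiability and with the persistent inequality $xyx \neq xy$ among noncommuting idempotent pairs appears combinatorially delicate — which is doubtless why the statement remains conjectural rather than theorem. A potentially cleaner route would be to develop a structure theory for the subsemigroup $\langle a, b \rangle$ generated by two noncommuting idempotents in an amiable semigroup: parameterize its elements by the alternating words $a, ab, aba, \ldots$ and $b, ba, bab, \ldots$, analyze the preperiods and periods of these two sequences under the amiability constraint (using the multiplication rules arising from $a^2 = a$ and $b^2 = b$ to compute products of alternating words), and attempt to show that only the degenerate configuration of Lemma \ref{lem:basic} is consistent — from which the conjecture follows immediately by restricting to that subsemigroup.
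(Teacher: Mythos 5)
The statement you are proving is the paper's \emph{Conjecture}: the authors offer no proof, only a computer search up to order $37$ and the order-$5$ special case (Proposition \ref{prp:order5}). Your proposal is likewise not a proof, and you say so yourself: the entire argument hinges on the ``cascade of identifications'' in your third paragraph producing either a pair of noncommuting idempotents satisfying $a'b'a'=a'b'$ or a contradiction, and that step is asserted as a goal rather than carried out. The preliminary computations are fine ($ap=pb=p$, $pa$ idempotent, and the use of $\mathcal{L}=\mathcal{L}^*$ and $\mathcal{R}=\mathcal{R}^*$ on idempotents to force identifications via amiability all check out), but they do not get near the conclusion. Note also that the minimal-counterexample framing is idle: minimality of $S$ is never invoked anywhere in the sketch, so it buys you nothing over a direct contradiction argument.

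More seriously, the strategy itself --- both the ``collapse'' version and the proposed ``cleaner route'' --- is confined to the subsemigroup $\langle a,b\rangle$ generated by one fixed pair of noncommuting idempotents, and the paper's own order-$8$ example (Table 3) shows this cannot work. There, $a$ and $e$ are noncommuting idempotents and $\langle a,e\rangle=\{a,c,d,e,f,g\}$ has only the idempotents $a$, $e$ and the zero $c$; since $aea=c\neq ae$ and $c$ commutes with everything, neither horn of your dichotomy occurs, no further identifications are forced, and $\langle a,e\rangle$ contains no copy of $M$. The copies of $M$ that do exist in that semigroup, namely $\{a,b,c,d\}$ and $\{h,a,c,f\}$, use idempotents lying outside $\langle a,e\rangle$. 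So it is simply false that the subsemigroup generated by an arbitrary pair of noncommuting idempotents in a finite amiable semigroup must degenerate to the configuration of Lemma \ref{lem:basic}; any correct proof of the conjecture must be prepared to switch to a different pair of noncommuting idempotents, or otherwise leave $\langle a,b\rangle$ entirely.
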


As some corroborating evidence for the conjecture, we now prove a special case.

\begin{proposition}
\label{prp:order5}
Let $S$ be an amiable semigroup of order $5$ which is not adequate. Then $S$ contains a subsemigroup
isomorphic to $M$.
\end{proposition}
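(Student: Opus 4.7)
The plan is the following. Let $a,b \in S$ be a pair of noncommuting idempotents, which exist since $S$ is not adequate, and set $c = ab$ and $d = ba$. Lemma \ref{lem:atleast4} gives $|\{a,b,c,d\}| = 4$, so since $|S| = 5$ there is a unique fifth element $x \in S$. Lemma \ref{lem:atleast4} also gives $aba \notin \{a,b\}$, so $aba \in \{c,d,x\}$, and I would split on this value.

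If $aba = c$, then Lemma \ref{lem:basic} shows that $\{a,b,c,d\}$ is already a subsemigroup isomorphic to $M$. If $aba = d$, the same lemma applied with $a$ and $b$ interchanged (in the equivalent form ``$bab = ba \Leftrightarrow aba = ba$'') yields the same conclusion, just as in the proof of Theorem \ref{thm:order4}. So the substantive case is $aba = x$, in which I would first deduce $bab = x$: by Lemma \ref{lem:atleast4}, $bab \in \{c,d,x\}$; $bab = c$ would force $aba = c \neq x$ by Lemma \ref{lem:basic}, and $bab = d$ would force $aba = d \neq x$ by its symmetric form, so $bab = x$.

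From $aba = bab = x$ together with $a^2 = a$ and $b^2 = b$, a quick check gives $ax = a(aba) = aba = x$, $xa = (aba)a = aba = x$, and symmetrically $bx = xb = x$; associativity propagates these to $cx = xc = dx = xd = x^2 = x$, so $x$ is a zero of $S$. Moreover $c^2 = a(bab) = ax = x$ and $d^2 = b(aba) = bx = x$, so the only idempotents of $S$ are $a$, $b$, and $x$. The goal is now to contradict amiability by showing that $c$ is not $\mathcal{L}^*$-related to any of these. Since $x$ is a zero, $\lambda_x$ is constant on $S^1$, while $\lambda_c(1) = c \neq x = \lambda_c(a)$, so $c \not\mathcal{L}^* x$. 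The pair $(1,b)$ shows $c \not\mathcal{L}^* a$ (since $cb = c = c \cdot 1$ but $ab = c \neq a = a \cdot 1$), and the pair $(a,c)$ shows $c \not\mathcal{L}^* b$ (since $ca = x = cc$ but $ba = d \neq x = bc$). Thus the $\mathcal{L}^*$-class of $c$ contains no idempotent, contradicting amiability.

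I don't foresee a serious obstacle. The argument is essentially forced once one observes that in the remaining case the fifth element must be a zero, after which $c$ becomes a ``nilpotent-like'' element ($c^2 = x$, $c \neq x$) that cannot be $\mathcal{L}^*$-related to any of the three idempotents. The only slightly delicate point is the symmetric use of Lemma \ref{lem:basic} needed to absorb the cases $aba = d$ and $bab = d$ into the main argument.
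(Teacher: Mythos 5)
Your proof is correct and takes essentially the same route as the paper's: after using Lemmas \ref{lem:atleast4} and \ref{lem:basic} to reduce to the case $aba = bab = x$, you show the fifth element is a zero and contradict abundance by exhibiting an element whose $\mathcal{L}^*$-class contains no idempotent. The only cosmetic difference is that the paper writes out the full multiplication table and uses $d$ as the witness, while you compute only what is needed and use $c$; both witnesses work.
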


\begin{proof}
Assume $S = \{a,b,c,d,e\}$ where $a$ and $b$ are noncommuting idempotents, $ab =c$ and $ba = d$.
(Here we use Lemma \ref{lem:atleast4} once again to guarantee that $a,b,c,d$ are distinct.)
Suppose that $S$ does not contain a copy of $M$. By Lemmas \ref{lem:atleast4} and \ref{lem:basic},
$aba \neq a,b,c,d$, and so $aba = e$. By the same argument with the roles of $a$ and $b$ reversed,
$bab = e$. Then $ae = ea = e$, $be = eb = e$, $ce = ed = ababa = aea = e$,
$de = ec = babab = beb = e$ and $ee = ababa = e$. Thus we have the multiplication table given in Table 2.
\begin{table}[htb]
\[
\begin{tabular}{c|ccccc}
   & $a$ & $b$ & $c$ & $d$ & $e$  \\ \hline
  $a$ & $a$ & $c$ & $c$ & $e$ & $e$ \\
  $b$ & $d$ & $b$ & $e$ & $d$ & $e$ \\
  $c$ & $e$ & $c$ & $e$ & $e$ & $e$ \\
  $d$ & $d$ & $e$ & $e$ & $e$ & $e$ \\
  $e$ & $e$ & $e$ & $e$ & $e$ & $e$
\end{tabular}
\]
\caption{}
\end{table}
However, we see immediately by inspection of the table that $d$ does not have any
idempotent in its $\mathcal{L}^*$-class. This contradicts the abundance of $S$.
\end{proof}

Note that there do exist amiable semigroups with noncommuting idempotents which themselves do not
generate a copy of $M$. The smallest order where this occurs is $8$. Table 3 gives an example.

\begin{table}[htb]
\[
\begin{tabular}{c|cccccccc}
    & $a$ & $b$ & $c$ & $d$ & $e$ & $f$ & $g$ & $h$ \\ \hline
  $a$ & $a$ & $c$ & $c$ & $c$ & $f$ & $f$ & $c$ & $f$ \\
  $b$ & $d$ & $b$ & $c$ & $d$ & $b$ & $g$ & $g$ & $c$ \\
  $c$ & $c$ & $c$ & $c$ & $c$ & $c$ & $c$ & $c$ & $c$ \\
  $d$ & $d$ & $c$ & $c$ & $c$ & $g$ & $g$ & $c$ & $g$ \\
  $e$ & $d$ & $b$ & $c$ & $d$ & $e$ & $g$ & $g$ & $h$ \\
  $f$ & $c$ & $c$ & $c$ & $c$ & $f$ & $c$ & $c$ & $f$ \\
  $g$ & $c$ & $c$ & $c$ & $c$ & $g$ & $c$ & $c$ & $g$ \\
  $h$ & $c$ & $c$ & $c$ & $c$ & $h$ & $c$ & $c$ & $h$
\end{tabular}
\]
\caption{}
\end{table}

\noindent Here $a$ and $e$ are noncommuting idempotents with $aea \neq ae, ea$ and
$eae \neq ae, ea$.
Note that this semigroup contains two copies of $M$, namely $\{a,b,c,d\}$ and
$\{h,a,c,f\}$. Indeed, every example known
to us contains \emph{some} pair of noncommuting idempotents which generates a copy of $M$.

\section{Idempotent and Pseudozero Inflations}

In this section, we will discuss constructions of new amiable semigroups
from existing ones by adjoining a single element. Of course, two obvious ways
of doing this are by adjoining an identity element or by adjoining a zero.
Here we discuss two other constructions.

Let $S$ be a semigroup and $e^2=e\in S$. The \emph{idempotent inflation of} $S$ \emph{induced by} $e$ is a magma $(U,\circ)$ such that $U=S\cup \{\epsilon\}$ (where $\epsilon$ is a symbol not in $S$), and for $x,y \in U$,
\[
x\circ y=\begin{cases}
     xy, & \text{if}\ x,y\in S \\
     ey, & \text{if}\ x=\epsilon, y\in S \\
     xe, & \text{if}\ x\in S, y=\epsilon \\
     \epsilon, & \text{if}\ x=y=\epsilon\,.
\end{cases}
\]
The idempotent inflation of $S$ induced by $e^2=e\in S$ will be denoted by $S[e]$.

\begin{lemma}
For a semigroup $S$ with idempotent $e\in S$, the idempotent inflation $S[e]$ is a semigroup.
\end{lemma}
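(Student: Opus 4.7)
The plan is to verify associativity $(x \circ y) \circ z = x \circ (y \circ z)$ in $U = S \cup \{\epsilon\}$ by a direct case analysis. Since the defining formula for $\circ$ has four clauses selected by whether each argument equals $\epsilon$, and we have three variables, there are at most $2^3 = 8$ sub-cases. I would organize them by $k$, the number of arguments among $x,y,z$ equal to $\epsilon$, producing four groups.

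The case $k=0$ is immediate: all three arguments lie in $S$, and the identity reduces to associativity in $S$. The case $k=1$ is essentially as easy, since the $\circ$-operation simply substitutes $e$ for the lone $\epsilon$ as soon as it meets an $S$-element; both associators unwind to the same triple product in $S$ with $e$ inserted in the slot originally occupied by $\epsilon$, after which associativity of $S$ closes the case. The case $k=3$ is trivial, since $\epsilon \circ \epsilon = \epsilon$ collapses both sides to $\epsilon$.

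The genuinely interesting case is $k=2$, and I would split it further into the non-adjacent and adjacent sub-cases. When the two $\epsilon$'s are non-adjacent, namely $x = z = \epsilon$ and $y \in S$, both sides evaluate to $eye$ without any further hypothesis. When the two $\epsilon$'s are adjacent, that is $x = y = \epsilon$ with $z \in S$ or $y = z = \epsilon$ with $x \in S$, the two sides fold differently: one side contracts the pair of $\epsilon$'s to a single $\epsilon$ before multiplying by the $S$-element, producing a single factor of $e$, while the other side yields a factor of $e^2$ acting on the $S$-element. These are reconciled exactly by the hypothesis $e^2 = e$.

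The only obstacle here is bookkeeping; there is no conceptual difficulty. The essential use of idempotence of $e$ is confined to the two adjacent sub-cases of $k = 2$, while all other cases reduce to associativity in $S$ or to a tautology. In writing up, I would simply dispatch $k = 0, 1, 3$ in a single sentence each and devote a short paragraph to the four sub-cases of $k = 2$.
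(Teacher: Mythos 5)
Your proposal is correct and follows essentially the same route as the paper: a direct case analysis of $(x\circ y)\circ z = x\circ(y\circ z)$ according to which arguments equal $\epsilon$, with the non-$\epsilon$ cases reducing to associativity in $S$. Your observation that idempotence of $e$ is needed exactly in the two adjacent two-$\epsilon$ subcases matches the paper's computation $xe = (xe)e$, which uses $e^2=e$ implicitly.
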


\begin{proof}
We show that for $x,y,z\in S$, $x\circ (y\circ z)=(x\circ y)\circ z$.
For $x,y\in S$ and $z=\epsilon$,
$(x\circ y)\circ \epsilon = (xy)e = x(ye) = x\circ (y\circ \epsilon)$.
Similarly, $(x\circ \epsilon)\circ y = x\circ (\epsilon\circ y)$ and
$\epsilon\circ (y\circ z) = (\epsilon\circ y)\circ z$. Finally, for $x\in S$,
$x\circ (\epsilon \circ \epsilon) = x\circ \epsilon = xe = (xe)e
= (xe)\circ \epsilon = (x\circ \epsilon)\circ \epsilon$.
Similarly, $(\epsilon \circ x)\circ \epsilon = \epsilon\circ (x\circ \epsilon)$
and $(\epsilon \circ \epsilon)\circ x = \epsilon\circ (\epsilon \circ x)$.
\end{proof}

Note that $S[e]$ is an inflation of $S$ in the usual sense (see \cite[ex.10, p.98]{clifford}) because the map $\phi: S[e] \to S$
given by $x\phi = x$ for $x\in S$ and $\epsilon\phi = e$ is an idempotent homomorphism onto $S$.

For a semigroup $S$ with idempotent $e\in S$ and idempotent inflation $U = S[e]$,
we will write $\mathcal{R}_S^*$ and $\mathcal{R}_U^*$ for the $\mathcal{R}^*$
relations in $S$ and $U$, respectively, and similarly for the $\mathcal{L}^*$ relations.

\begin{lemma}
\label{lem:stars}
Let $S$ be a semigroup with idempotent $e\in S$ and let $U = S[e]$ be the idempotent
inflation of $S$ induced by $e$. Then
(i) the $\mathcal{R}_U^*$-classes are $\{\epsilon\}$ and the $\mathcal{R}_S^*$-classes, and
(ii) the $\mathcal{L}_U^*$-classes are $\{\epsilon\}$ and the $\mathcal{L}_S^*$-classes.
\end{lemma}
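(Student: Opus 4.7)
The plan is to prove part (i) in detail; part (ii) follows by a left--right dual argument, replacing the identities $\epsilon \circ s = es = e \circ s$ with $s \circ \epsilon = se = s \circ e$. The statement breaks into two claims: that $\{\epsilon\}$ is its own $\mathcal{R}^*_U$-class, and that $\mathcal{R}^*_S$ and $\mathcal{R}^*_U$ coincide on $S \times S$.

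To separate $\epsilon$ from each $s \in S$, I use $x = \epsilon$ and $y = e$ as witnesses in $U^1$. From the definition of $\circ$, $\epsilon \circ \epsilon = \epsilon$ while $e \circ \epsilon = e \cdot e = e$, so the two products disagree; on the other hand, $\epsilon \circ s = es = e \circ s$ for every $s \in S$. Hence the $\mathcal{R}^*_U$ test fails at the pair $(\epsilon, s)$, proving that $\epsilon$ is $\mathcal{R}^*_U$-unrelated to any element of $S$. For the coincidence of $\mathcal{R}^*_S$ and $\mathcal{R}^*_U$ on $S$, the direction $\mathcal{R}^*_U \Rightarrow \mathcal{R}^*_S$ is immediate from the inclusion $S^1 \subseteq U^1$. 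For the converse, given $s \mathcal{R}^*_S t$ and $x, y \in U^1$ with $x \circ s = y \circ s$, I propose to substitute $e$ for any occurrence of $\epsilon$ in $x$ or $y$: the products with $s$ are preserved because $\epsilon \circ s = es = e \circ s$. The substituted equation lives in $S^1$, so $s \mathcal{R}^*_S t$ yields the analogous equation with $t$, which the reverse substitution (valid for the same reason, with $t$ in place of $s$) translates back to $x \circ t = y \circ t$.

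The only real care needed is with the distinction between $S^1$ and $U^1$: $U$ is a monoid precisely when $e$ is the identity of $S$ (in which case $\epsilon$ acts as the identity of $U$), but in either case $S^1$ embeds compatibly into $U^1$, so the substitution argument applies uniformly. I do not anticipate any genuine obstacle; the only potentially tedious step is the case split on whether $x$ or $y$ equals $\epsilon$ in the $\mathcal{R}^*_S \Rightarrow \mathcal{R}^*_U$ direction, and the substitution $\epsilon \leftrightarrow e$ collapses all such cases to the $S$-side at once.
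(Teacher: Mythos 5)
Your proposal is correct and follows essentially the same route as the paper: the key observation in both is that $\epsilon$ and $e$ act identically when multiplied against elements of $S$ (so $\epsilon$ can be swapped for $e$ in any test pair, reducing $\mathcal{R}^*_U$ on $S\times S$ to $\mathcal{R}^*_S$), but act differently on $\epsilon$ itself (which separates $\epsilon$ from every $s\in S$). The only cosmetic differences are that the paper derives the separation via a contradiction starting from the witnesses $1$ and $\epsilon$ rather than your direct pair $(\epsilon,e)$, and it gets the inclusion $\mathcal{R}^*_U\subseteq\mathcal{R}^*_S$ from the oversemigroup characterization of $\mathcal{R}^*$ rather than by restricting test elements to $S^1$.
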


\begin{proof}
Firstly suppose $s\mathcal{R}_U^* \epsilon$
for some $s\in S$. Since $1\circ \epsilon = \epsilon = \epsilon\circ \epsilon$, we have
$1\circ s = \epsilon\circ s = es = e\circ s$. But then
$1\circ \epsilon = e\circ \epsilon$, that is, $\epsilon = e$, which is a contradiction.
Therefore the $\mathcal{R}_U^*$-class of $\epsilon$ is $\{\epsilon\}$. The remaining
$\mathcal{R}_U^*$-classes are contained in $S$.

Next, if $s\mathcal{R}_U^* t$ for $s,t\in S$, then $s\mathcal{R}_T t$ for some
oversemigroup $T$ of $U$. Since $T$ is also an oversemigroup of $S$, $s\mathcal{R}_S^* t$.
Thus each $\mathcal{R}_U^*$-class is contained in an
$\mathcal{R}_S^*$-class. What remains is to show the reverse inclusion. Thus suppose
$s\mathcal{R}_S^* t$
for some $s,t\in S$. For $x\in S^1$, $\epsilon\circ s = x\circ s$ if and only if $es = xs$
if and only if $et = xt$ if and only if $\epsilon\circ t = x\circ t$. Therefore, for
all $x,y\in U^1$, $x\circ s=y\circ s\Leftrightarrow x\circ t=y\circ t$. This shows
$s\mathcal{R}_U^* t$ as desired.

The corresponding results for the $\mathcal{L}^*$ relations follow by symmetry.
\end{proof}

Immediately from Lemma \ref{lem:stars}, we have the following.

\begin{theorem}
\label{thm:inflation-preserve}
Let $S$ be a semigroup, and let $e\in S$ be an idempotent.
Then $S$ is abundant, amiable or adequate if and only if $S[e]$ has the same corresponding property.
\end{theorem}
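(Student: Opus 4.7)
The plan is to apply Lemma \ref{lem:stars} together with an easy analysis of the idempotent set of $U = S[e]$. The only element of $U\setminus S$ is $\epsilon$, and since $\epsilon\circ\epsilon = \epsilon$ by definition, $\epsilon$ is itself an idempotent of $U$. Thus the idempotents of $U$ are exactly the idempotents of $S$ together with $\epsilon$, and by Lemma \ref{lem:stars} the singleton $\{\epsilon\}$ is both an $\mathcal{R}_U^*$-class and an $\mathcal{L}_U^*$-class, containing $\epsilon$ as its unique idempotent.

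For abundance I would argue as follows. Every $\mathcal{R}_U^*$-class (resp.\ $\mathcal{L}_U^*$-class) is either $\{\epsilon\}$, which contains the idempotent $\epsilon$, or coincides with an $\mathcal{R}_S^*$-class (resp.\ $\mathcal{L}_S^*$-class), whose idempotent content is unchanged by the inflation since no new idempotents of $S$ are added. Hence every such class contains an idempotent in $U$ iff the same holds in $S$. For amiability, the same decomposition shows that the number of idempotents in each $U$-class equals the number in the corresponding $S$-class (or equals $1$ for the new class $\{\epsilon\}$), so uniqueness transfers in both directions.

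For adequacy, combining the abundance equivalence with the equivalence of commutativity of idempotents suffices. One direction is trivial: if all idempotents of $U$ commute then so do the $S$-idempotents, as they form a subset. For the converse, if the idempotents of $S$ commute, then in particular $e$ commutes with every idempotent $s\in S$, so $\epsilon\circ s = es = se = s\circ \epsilon$; together with $\epsilon\circ\epsilon = \epsilon$, this shows that $\epsilon$ commutes with every idempotent of $U$, and hence all idempotents of $U$ commute.

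The work is pure bookkeeping; the only thing to verify carefully is that $\epsilon$ sits in its own $\mathcal{R}_U^*$- and $\mathcal{L}_U^*$-class (supplied by Lemma \ref{lem:stars}), so it neither enters nor displaces idempotents from the $S$-classes. The mildest obstacle is the adequacy direction, where one must notice that $e$ itself being an idempotent of $S$ is exactly what forces $\epsilon$ to commute with the other idempotents of $U$.
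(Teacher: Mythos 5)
Your proof is correct and follows the same route as the paper, which simply states that the theorem is ``immediate'' from Lemma \ref{lem:stars}; you have supplied exactly the bookkeeping the authors left implicit (the idempotents of $S[e]$ are those of $S$ together with $\epsilon$, the class decomposition transfers abundance and amiability, and $e$ being idempotent forces $\epsilon$ to commute with the idempotents of $U$ when those of $S$ commute).
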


Let $S$ be a semigroup with a zero $0\in S$. The \emph{pseudozero inflation} of $S$
is a magma $(U,\circ)$ where $U = S\cup \{\bar{0}\}$ (where $\bar{0}$ is a symbol
not in $S$) and for $x,y\in U$,
\[
x\circ y=\begin{cases}
     xy, & \text{if}\ x,y\in S \\
     \bar{0}, & \text{if}\ x=\bar{0}, y\in S\ \text{or}\ x\in S, y=\bar{0} \\
     0, & \text{if}\ x=y=\bar{0}\,.
\end{cases}
\]
The pseudozero inflation of $S$, where $0\in S$ is a zero, will be denoted by $S^{\bar{0}}$.

\begin{lemma}
For a semigroup $S$ with zero $0\in S$, the pseudozero inflation $S^{\bar{0}}$ is a semigroup.
\end{lemma}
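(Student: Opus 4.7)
The plan is to verify associativity of $\circ$ on $U = S \cup \{\bar{0}\}$ by a case analysis on how many of the three arguments equal $\bar{0}$. When $x, y, z \in S$, the identity $(x \circ y) \circ z = x \circ (y \circ z)$ reduces to associativity in $S$, and nothing needs to be checked.

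When exactly one of $x, y, z$ equals $\bar{0}$, both sides reduce to $\bar{0}$: any $\circ$-product in which a single factor is $\bar{0}$ and the other is in $S$ equals $\bar{0}$, and a subsequent $\circ$-product of $\bar{0}$ with an element of $S$ is again $\bar{0}$. The three positional subcases are symmetric and immediate.

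The case where exactly two of the arguments are $\bar{0}$ is the only place where the hypothesis that $0$ is a zero of $S$ is actually used. Here $\bar{0} \circ \bar{0} = 0 \in S$, so one grouping of the associativity equation lands in $S$ while the other may still look like a $\bar{0}$-valued expression, and we must show they agree. For example, with $x \in S$ and $y = z = \bar{0}$ one has $(x \circ \bar{0}) \circ \bar{0} = \bar{0} \circ \bar{0} = 0$ and $x \circ (\bar{0} \circ \bar{0}) = x \circ 0 = x \cdot 0 = 0$, where the final equality uses that $0$ is a zero of $S$. The subcases $(\bar{0}, y, \bar{0})$ with $y \in S$ and $(\bar{0}, \bar{0}, z)$ with $z \in S$ are handled in the same way, each time producing $0$ on both sides by invoking the zero property. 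This is the main (very mild) obstacle and the only step where the zero hypothesis is essential.

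Finally, when $x = y = z = \bar{0}$, one has $(\bar{0} \circ \bar{0}) \circ \bar{0} = 0 \circ \bar{0} = \bar{0}$ and $\bar{0} \circ (\bar{0} \circ \bar{0}) = \bar{0} \circ 0 = \bar{0}$, using the mixed rule applied to $0 \in S$ and $\bar{0}$. Assembling these cases yields associativity, so $S^{\bar{0}}$ is a semigroup.
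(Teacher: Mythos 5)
Your proof is correct and follows essentially the same case analysis as the paper's own argument: trivial when all three factors lie in $S$, both sides collapsing to $\bar{0}$ when exactly one factor is $\bar{0}$, and the zero property of $0$ handling the cases with two copies of $\bar{0}$. You even treat the all-$\bar{0}$ case explicitly, which the paper leaves tacit, so nothing further is needed.
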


\begin{proof}
We show that for $x,y,z\in S$, $x\circ (y\circ z)=(x\circ y)\circ z$.
For $x,y\in S$ and $z=\bar{0}$,
$(x\circ y)\circ \bar{0} = \bar{0} = x\circ (y\circ \bar{0})$.
Similarly, $(x\circ \bar{0})\circ y = x\circ (\bar{0}\circ y)$ and
$\bar{0}\circ (y\circ z) = (\bar{0}\circ y)\circ z$. For $x\in S$,
$x\circ (\bar{0} \circ \bar{0}) = x\circ 0 = 0 =
\bar{0}\circ \bar{0} = (x\circ \bar{0})\circ \bar{0}$. Similarly,
$(\bar{0} \circ \bar{0})\circ x = \bar{0}\circ (\bar{0} \circ x)$ and
 $(\bar{0} \circ x)\circ \bar{0} = \bar{0}\circ (x\circ \bar{0})$.
\end{proof}

As in the idempotent case,
$S^{\bar{0}}$ is an inflation of $S$ in the usual sense because the map
$\phi: S^{\bar{0}} \to S$
given by $x\phi = x$ for $x\in S$ and $\bar{0}\phi = 0$ is an idempotent
homomorphism onto $S$.

For a semigroup $S$ with zero $0\in S$ and pseudozero inflation $U = S^{\bar{0}}$,
we will again write $\mathcal{L}_S^*$ and $\mathcal{L}_U^*$ for the $\mathcal{L}^*$
relations in $S$ and $U$, respectively, and similarly for the $\mathcal{R}^*$ relations.

\begin{lemma}
\label{lem:pseudo-stars}
Let $S$ be a semigroup with zero $0\in S$ and let $U = S^{\bar{0}}$ be the pseudozero
inflation of $S$. Then
(i) the $\mathcal{R}_U^*$-classes are $\{0,\bar{0}\}$ and the $\mathcal{R}_S^*$-classes contained in $S\backslash \{0\}$, and
(ii) the $\mathcal{L}_U^*$-classes are $\{0,\bar{0}\}$ and the $\mathcal{L}_S^*$-classes contained in $S\backslash \{0\}$.
\end{lemma}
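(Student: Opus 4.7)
The plan is to mirror the proof of Lemma \ref{lem:stars} closely, since the pseudozero inflation is structurally similar to the idempotent inflation, with $\bar{0}$ playing a role analogous to $\epsilon$ but with the twist that $\bar{0} \circ \bar{0} = 0 \in S$ rather than $\bar{0}$ itself. It suffices to prove (i); part (ii) then follows by the obvious left-right dual argument.

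First I would establish that $0 \mathrel{\mathcal{R}_U^*} \bar{0}$. This is a direct verification: for every $x \in U^1$, the element $x \circ 0$ equals $0$ when $x \in S^1$ and equals $\bar{0}$ when $x = \bar{0}$, whereas $x \circ \bar{0}$ equals $\bar{0}$ when $x \in S^1$ and equals $0$ when $x = \bar{0}$. Thus the right-multiplication maps by $0$ and by $\bar{0}$ have the same kernel, giving $0 \mathrel{\mathcal{R}_U^*} \bar{0}$. Next, I would check that no $s \in S \setminus \{0\}$ is $\mathcal{R}_U^*$-related to $0$: take $x = 0$ and $y = 1$, so $x \circ 0 = 0 = y \circ 0$ but $x \circ s = 0 \neq s = y \circ s$ since $s \neq 0$. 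Combined with transitivity and the previous step, this pins down $\{0,\bar{0}\}$ as a complete $\mathcal{R}_U^*$-class, and forces every other $\mathcal{R}_U^*$-class to be contained in $S \setminus \{0\}$.

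It then remains to show that for $s, t \in S \setminus \{0\}$, $s \mathrel{\mathcal{R}_U^*} t$ if and only if $s \mathrel{\mathcal{R}_S^*} t$. For the forward direction I would invoke the characterization cited in the introduction: $s \mathrel{\mathcal{R}_U^*} t$ means $s \mathrel{\mathcal{R}_T} t$ in some oversemigroup $T$ of $U$, and since $T$ is also an oversemigroup of $S$, this yields $s \mathrel{\mathcal{R}_S^*} t$. For the reverse direction, given $s \mathrel{\mathcal{R}_S^*} t$, I would check $x \circ s = y \circ s \Leftrightarrow x \circ t = y \circ t$ for all $x, y \in U^1$ by cases. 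When $x, y \in S^1$, this is just $s \mathrel{\mathcal{R}_S^*} t$. When one of $x, y$ equals $\bar{0}$ and the other lies in $S^1$, one side of each proposed equation is $\bar{0} \in U \setminus S$ while the other lies in $S$ (using that $zs, zt \in S$ for all $z \in S^1$), so both equations fail; and when $x = y = \bar{0}$, both hold trivially.

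The main obstacle I anticipate is simply keeping the case analysis in the reverse direction organized, particularly ensuring that one cannot accidentally have $\bar{0} = z s$ for some $z \in S^1$ --- but this is immediate from $\bar{0} \notin S$. Everything else is bookkeeping of the same flavor as Lemma \ref{lem:stars}, so no essentially new idea is needed.
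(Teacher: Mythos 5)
Your proposal is correct and follows essentially the same route as the paper: identify $\{0,\bar{0}\}$ as an $\mathcal{R}_U^*$-class by comparing kernels of right translations, use the oversemigroup characterization of $\mathcal{R}^*$ for the forward inclusion on $S\setminus\{0\}$, and a case analysis on whether $x,y$ lie in $S^1$ or equal $\bar{0}$ for the converse. The only cosmetic difference is that the paper phrases the first step as ``$x\circ s = y\circ s$ for all $x,y\in S^1$ iff $s=0$'' rather than exhibiting the explicit witnesses $x=0$, $y=1$.
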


\begin{proof}
%We follow the same notation conventions as in the proof of Lemma \ref{lem:stars}.
We have $x\circ\bar{0} = y\circ\bar{0}$ if and only if either $x, y\in S^1$ or $x=y=\bar{0}$.
For $s\in S$, we have $x\circ s = y\circ s$ for all $x,y\in S^1$ if and only if $s = 0$.
It follows that $\{0,\bar{0}\}$ is an $\mathcal{R}_U^*$-class, and so the remaining
$\mathcal{R}_U^*$-classes are contained in $S\backslash \{0\}$.

As in the proof of Lemma \ref{lem:stars}, if $s\mathcal{R}_U^* t$ for some $s,t\in S\backslash \{0\}$,
then $s\mathcal{R}_S^* t$ since any oversemigroup of $U$ is an oversemigroup of $S$.
Conversely, suppose $s\mathcal{R}_S^* t$ for $s,t\in S$. If for $x,y\in U^1$, we have
$x\circ s = y\circ s$, then either $x,y\in S^1$ or $x=y=\bar{0}$. In either case,
$x\circ t = y\circ t$. Similarly, $x\circ t = y\circ t$ implies $x\circ s = y\circ s$
for all $x,y\in U^1$. Therefore $s\mathcal{R}_U^* t$.

The corresponding results for the $\mathcal{L}^*$ relations follow by symmetry.
\end{proof}

\begin{theorem}
\label{thm:pseudozero-preserve}
Let $S$ be a semigroup with zero $0\in S$.
Then $S$ is abundant, amiable or adequate if and only if $S^{\bar{0}}$ has the same corresponding property.
\end{theorem}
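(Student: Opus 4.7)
The plan is to derive this theorem from Lemma \ref{lem:pseudo-stars} in direct parallel with how Theorem \ref{thm:inflation-preserve} follows from Lemma \ref{lem:stars}. The first thing I would observe is that $\bar{0}$ is \emph{not} an idempotent of $U = S^{\bar{0}}$, because $\bar{0}\circ\bar{0} = 0 \neq \bar{0}$. Consequently the idempotents of $U$ coincide with the idempotents of $S$, and in particular $0$ is an idempotent of both.

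Next I would record the routine fact that $\{0\}$ is by itself an $\mathcal{R}^*_S$-class (and, symmetrically, an $\mathcal{L}^*_S$-class) of $S$: if $s\mathcal{R}^*_S 0$ then $xs = ys \Leftrightarrow x\cdot 0 = y\cdot 0$ for all $x,y \in S^1$, and since the right side always holds, taking $x = 1$, $y = 0$ forces $s = 0\cdot s = 0$. Combining this with Lemma \ref{lem:pseudo-stars}, the $\mathcal{R}^*_S$-classes of $S$ are precisely $\{0\}$ together with the $\mathcal{R}^*_S$-classes lying in $S \setminus \{0\}$, while the $\mathcal{R}^*_U$-classes of $U$ are $\{0,\bar{0}\}$ together with those same classes in $S\setminus\{0\}$; analogous statements hold on the $\mathcal{L}^*$ side.

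From here all three equivalences drop out. For abundance, the class $\{0\}$ in $S$ contains the idempotent $0$, and so does its counterpart $\{0,\bar{0}\}$ in $U$; the remaining classes in $S$ and $U$ are literally the same sets. Therefore every $\mathcal{R}^*_S$-class contains an idempotent iff every $\mathcal{R}^*_U$-class does, and similarly for $\mathcal{L}^*$. For amiability, both $\{0\}$ and $\{0,\bar{0}\}$ contain exactly one idempotent, namely $0$ (crucially, $\bar{0}$ is \emph{not} idempotent), so the uniqueness condition holds in $S$ iff it holds in $U$. For adequacy, the idempotents of $S$ and $U$ are the same elements with the same multiplication, hence they commute in one iff they commute in the other; combined with the already-proved equivalence of abundance, adequacy transfers.

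I do not anticipate any real obstacle: the argument is essentially bookkeeping built on top of Lemma \ref{lem:pseudo-stars}. The only thing to be a little careful about is the observation that $\bar{0}$ fails to be idempotent, which is exactly what makes the $\mathcal{R}^*_U$-class $\{0,\bar{0}\}$ count as having a unique idempotent for the amiable case.
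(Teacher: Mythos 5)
Your proposal is correct and follows exactly the route the paper intends: the paper states Theorem \ref{thm:pseudozero-preserve} as an immediate consequence of Lemma \ref{lem:pseudo-stars}, and your argument is precisely the bookkeeping left implicit there. Your two explicit observations --- that $\{0\}$ is itself an $\mathcal{R}^*_S$- and $\mathcal{L}^*_S$-class of $S$, and that $\bar{0}$ is not an idempotent of $S^{\bar{0}}$ (so $\{0,\bar{0}\}$ still contains a \emph{unique} idempotent) --- are exactly the points on which the "immediate" deduction rests.
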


If an amiable semigroup is not adequate, then neither its pseudozero inflation
nor any of its idempotent inflations will be adequate, and so we have two ways
of generating new amiable semigroups which are not adequate from existing ones.
But these one point inflations certainly do not tell the whole story, even for
small orders. We verified the following result computationally.

\begin{theorem}
\label{thm:order5}
The following seven semigroups are the unique (up to isomorphism) amiable
semigroups of order $5$ which are not adequate:
(i) $M^1$ ($M$ with an identity element adjoined),
(ii) $M^0$ ($M$ with a zero adjoined),
(iii) $M^{\bar{c}}$ (the pseudozero inflation of $M$),
(iv) $M[a]$, $M[b]$, $M[c]$ (the three idempotent inflations of $M$) and
(v) the semigroup given by the following table:
\begin{table}[htb]
\[
\begin{tabular}{c|ccccc}
      & a & b & c & d & e\\
\hline
    a & a & c & c & c & c \\
    b & d & b & c & d & e \\
    c & c & c & c & c & c \\
    d & d & c & c & c & c \\
    e & e & c & c & c & c\\
\end{tabular}
\]
\end{table}
\end{theorem}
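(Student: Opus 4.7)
The plan is to reduce the theorem to a classification of one-element extensions of the semigroup $M$. By Proposition \ref{prp:order5}, any amiable semigroup $S$ of order $5$ which is not adequate contains a subsemigroup isomorphic to $M$. Fix such an embedding, label its four elements $a,b,c,d$ as in Table 1, and let $e$ denote the fifth element of $S$. The theorem then amounts to determining all choices of the nine products $ae,be,ce,de,ea,eb,ec,ed,ee\in S$ for which $(S,\cdot)$ is an associative, amiable, non-adequate semigroup, and then sorting the survivors into isomorphism classes.

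First I would exploit the internal structure of $M$ to cut down the possibilities. Associativity with the zero $c$ of $M$ yields $x(ce)=(xc)e=ce$ for every $x\in M$, which forces either $ce=c$ or $ce=e$; in the latter subcase it also forces $xe=e$ for all $x\in M$. A dual argument constrains $ec$, and similar identities using the idempotency of $a$ and $b$, such as $a(ae)=ae$ and $(ae)b=a(eb)$, place the remaining products $ae,be,ea,eb$ in specific principal ideals of $M$. The amiability condition adds further constraints through the criterion from the introduction: if $e$ is itself an idempotent, then by amiability $e$ cannot be $\mathcal{R}^*$- or $\mathcal{L}^*$-equivalent in $S$ to any of the existing idempotents $a,b,c$, since otherwise the unique-idempotent condition fails.

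The case analysis then branches naturally into: (i) $e$ is a two-sided identity, yielding $M^1$; (ii) $e$ is an absorbing zero, yielding $M^0$; (iii) $e$ mimics the zero $c$ as in the pseudozero inflation, yielding $M^{\bar{c}}$; (iv) $e$ mimics one of the idempotents $a,b,c$ as in an idempotent inflation, yielding $M[a]$, $M[b]$, or $M[c]$; and (v) a residual case in which the constraints above allow exactly the sporadic table displayed. That the six named extensions are amiable and not adequate follows from Theorems \ref{thm:inflation-preserve} and \ref{thm:pseudozero-preserve} together with the trivial preservation under adjunction of an identity or an absolute zero; the sporadic example in (v) must be checked directly for associativity and amiability.

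The main obstacle is the volume of case-work in the associativity and amiability verifications, which is why the authors resorted to a computer search. A human proof would need to work through the constraints in a carefully chosen order, ruling out each candidate table that either violates associativity at some triple or produces an $\mathcal{L}^*$- or $\mathcal{R}^*$-class not containing a unique idempotent. Once the list of seven tables is obtained, pairwise non-isomorphism follows from invariants such as the sizes of the $\mathcal{L}^*$- and $\mathcal{R}^*$-classes, the total number of idempotents, and the presence or absence of an identity or an absolute zero.
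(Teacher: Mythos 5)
Your overall strategy is sound and, for what it is worth, more informative than what the paper actually provides: the authors give no written argument for this theorem at all, stating only that the result was ``verified computationally.'' Your key reduction --- invoking Proposition \ref{prp:order5} to conclude that every amiable, non-adequate semigroup of order $5$ is $M$ together with one extra element $e$, so that the classification reduces to determining the nine products $ae,be,ce,de,ea,eb,ec,ed,ee$ --- is correct and is exactly the right way to turn the computer search into a human-checkable proof. Your individual constraints also check out: since $c$ is the zero of $M$, associativity forces $ce\in\{c,e\}$ with the second option forcing $xe=e$ for all $x\in M$, and the amiability restriction on any new idempotent $e$ is correctly stated. The seven listed semigroups are indeed pairwise non-isomorphic by the invariants you name (for instance, $M^{\bar{c}}$ and the sporadic example are the only two with exactly three idempotents, and they are separated by their $\mathcal{L}^*$-class structure).

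The genuine gap is that the decisive step is described rather than performed. Everything hinges on the finite case analysis that pins down the nine products, verifies associativity and amiability for each surviving table, and --- crucially --- shows that \emph{no eighth} example arises; your proposal explicitly defers this (``a human proof would need to work through the constraints\ldots''). Until that enumeration is actually carried out, the claim of completeness of the list of seven is unproved, and that completeness is the entire content of the theorem. A secondary point to be careful about: Proposition \ref{prp:order5} gives you \emph{some} copy of $M$ generated by a pair of noncommuting idempotents, but you should note explicitly that fixing one such copy loses no generality only because any isomorphism of $S$ can be prenormalized to carry that copy onto Table 1; this is easy but should be said. So the route is viable and genuinely different from the paper's (computational) verification, but as written it is a plan for a proof, not a proof.
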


\section{Problems}

Besides our main conjecture in {\S}3, we also suggest the following.

For each $x$ in an amiable semigroup, there is a unique idempotent $x_{\ell}$ in the $\mathcal{L}^*$-class of $x$
and a unique idempotent $x_r$ in the $\mathcal{R}^*$-class of $x$. One can view such semigroups as algebras of type
$\langle 2,1,1\rangle$ where the binary operation is the semigroup multiplication and the unary operations are
$x\mapsto x_{\ell}$ and $x\mapsto x_r$. Such algebras form a quasivariety axiomatized by, for instance, the
eight quasi-identities

\begin{center}
\begin{tabular}{ccc}
$x_{\ell} x_{\ell} = x_{\ell}$ & \qquad & $x_r x_r = x_r$ \\
$x x_{\ell} = x$ & & $x_r x = x$ \\
$xy = xz \Rightarrow x_{\ell}y = y_{\ell}z$ & & $yx = zx \Rightarrow yx_r = zx_r$
\end{tabular}

\begin{tabular}{c}
$(\ xx = x\ \&\ yy = y\ \&\ xy = x\ \&\ yx = y\ ) \Rightarrow x = y$ \\
$(\ xx = x\ \&\ yy = y\ \&\ xy = y\ \&\ yx = x\ ) \Rightarrow x = y$.
\end{tabular}
\end{center}

\noindent Fountain's infinite example \cite{fountain1} and our finite examples show that the quasivariety of amiable semigroups properly contains the quasivariety of adequate semigroups.
Kambites determined the free objects in the latter quasivariety \cite{MK2}, and thus the
following is quite natural.

\begin{problem}
Determine the free objects in the quasivariety of amiable semigroups.
\end{problem}

As we saw in Theorem \ref{thm:order5}, there are amiable semigroups with noncommuting idempotents which are not given by idempotent or pseudozero inflations or by
adjoining an identity element or zero element to a given amiable semigroup. Consider, for the moment, constructions such as direct products and amiable subsemigroups to be ``trivial''.

\begin{problem}
Find other nontrivial ways of building new amiable semigroups from existing ones.
\end{problem}

\begin{acknowledgment}
We are pleased to acknowledge the assistance of
the finite model builder \textsc{Mace4} and the automated deduction tool \textsc{Prover9},
both developed by
McCune \cite{McCune}. We initially found small examples using
\textsc{Mace4} and then reverse engineered them into the examples
presented here. \textsc{Prover9} was helpful in working out the proofs in {\S}2.

The first author was partially supported by FCT and FEDER,
Project POCTI-ISFL-1-143 of Centro de Algebra da Universidade de Lisboa,
and by FCT and PIDDAC through the project PTDC/MAT/69514/2006.
\end{acknowledgment}

\end{document}